\begin{document}

\begin{frontmatter}

\title{On the configurations of nine points on a cubic curve}

\author{Alessandro Logar\thanksref{FRA}}
\address{Dipartimento di Matematica e Geoscienze,
  Universit\`a degli Studi di Trieste, Via Valerio 12/1, 34127 Trieste, Italy.}
\ead{logar@units.it}
\author{Sara Paronitti}
  \address{Dipartimento di Matematica e Geoscienze,
  Universit\`a degli Studi di Trieste, Via Valerio 12/1, 34127 Trieste, Italy.}
\ead{sara.paronitti@gmail.com}

\thanks[FRA]{Partially supported by the FRA 2018 grant
  ``Aspetti geometrici, topologici e computazionali delle variet\`{a}'',
    Universit\`{a} di Trieste}

\begin{abstract}
We study the reciprocal position of nine points in the
  plane, according to their collinearities. In particular, we consider the case
  in which the nine points are contained in an irreducible cubic curve and
  we give their classification.
  If we consider two configurations different when the associated
  incidence structures are not isomorphic, we see that there are 131
  configurations that can be realized in $\mathbb{P}^2_{\mathbb{Q}}$,
  and there are two more in  $\mathbb{P}^2_K$, where
  $K =\mathbb{Q}[\sqrt{-3}]$ (one of
  the two is the Hesse configuration given by the nine inflection points of
  a cubic curve). Finally, we compute the possible Hilbert functions
  of the ideals of the nine points. 
\end{abstract}
\end{frontmatter}

\section{Introduction} 

The study of reciprocal position of a finite number of points in
the plane and a finite set of lines joining some of the points, 
is a classical problem whose origin dates back to the past and 
that has been considered from many different points
of view. Recall, for instance, the Pappus configuration
which takes its name from Pappus of Alexandria, or
the Sylvester problem (see~\cite{wiki2}), formulated
by Sylvester in the last decade of the nineteenth century,
or the Orchard Planting Problem (see~\cite{bgs}), which
takes its origin from the book ``Rational Amusement for Winter Evenings''
by John Jackson (1821) (see also~\cite{wiki1}).

In general, by $(p_\gamma,  l_\pi)$ it is usually denoted a configuration
of $p$ points and $l$ lines such that each point is contained in $\gamma$
lines and each line contains $\pi$ points (hence $p \gamma = l\pi$). The
configurations $(p_\gamma,  l_\pi)$ have been intensively studied with
several different tools (they can be seen, for instance,
as an application of matroid
theory or of graph theory); for a more complete survey of the results we refer
to~\cite{bs,gr,cx,cx1,cx2,cx3} and the references given there.

Consider now the well known \emph{Hesse configuration}. It is realized by
the nine inflection points of a smooth cubic curve, hence it is quite natural
to generalize the problem and look for cubic curves which contain
points with some other 
collinearity conditions. For instance, in~\cite{cp} cubic curves somehow
associated to triangles are studied, while in~\cite{mpw2} it is raised
the question
whether a plane cubic curve contains the Pappus or the Desargues configuration.

In this paper we focus our attention to the case of
$9$ points in the plane (like in the
Pappus or the Hesse configurations) and we consider
the only constrain that at most triplets of points are collinear.
Then we associate to such
$9$-tuple of points the corresponding incidence
structure (two configurations of points are considered equivalent
if the corresponding incidence structures
are isomorphic). First of all we shortly classify all the
possible configurations and we
determine realizability in the projective plane $\mathbb{P}^2_K$,
where $K$ is $\mathbb{Q}$
or a suitable algebraic extension of $\mathbb{Q}$. 
Successively, we add the condition that the points lay on an
irreducible cubic curve and we
determine which configurations survive. We get in this way a complete
list of $9$-tuple
of points on a cubic curve that satisfy all the possible
kind of collinearities (the total number is $131$ if we consider the points
with coordinates in $\mathbb{Q}$, while if we extend the coordinates in
$\mathbb{Q}[\sqrt{-3}]$ the two well known M\"obius-Kantor and Hesse
configurations are added).
We see that there are several $9$-tuple of points which are realizable
in the plane,
but that, when considered on a cubic either are not realizable or
have to satisfy
further collinearities. We show that all these cases are consequence
of the Cayley-Bacharach theorem.

In Section 2 we give an algorithm which computes all the possible
incidence structures
(up to isomorphism) that can be obtained from the possible
collinearities of $9$ points
and we sketch how to determine if they are realizable in projective
planes. In Section 3
we give the final classification of those incidence structures which
lay on a cubic curve;
finally, in Section~4, we determine the possible Hilbert functions
of all the sets of points of the incidence structures of Section~3.

In order to make the computations, we have intensively used the
computer algebra
packages~\cite{sage} and~\cite{CoCoA-5}.

\section{Notations and search for possible configurations}

Let $P_0, \dots, P_8$ be $9$ points of the plane. First of all, we want to
find the possible configurations they can assume, according to
the constrain that there can be triplets but not quadruplets of
collinear points. We can represent a configuration of points
by an incidence structure (see, for instance,~\cite{mo}), i.e.\ by
a couple of sets $(\mathscr{B},\mathscr{P})$, where $\mathscr{P}$
is the set of the points $P_0, \dots, P_8$ and 
$\mathscr{B}$ is a set of blocks, where each block contains a triplet of
collinear points. For brevity, the block $(P_i, P_j, P_k)$ will be denoted
by $(i, j, k)$ or, in a more concise way, by $ijk$. Two configurations
of points represented by
the incidence structures
$(\mathscr{B},\mathscr{P})$ and $(\mathscr{B}',\mathscr{P}')$
will be considered equivalent if the two incidence structures
are \emph{isomorphic}, i.e.\ if
there exists a bijection $f : \mathscr{P} \longrightarrow \mathscr{P}'$
such that $\mathscr{B}' = \{f(B) \mid B \in \mathscr{B} \}$ (where
$f(B) = (f(b), b\in B)$). Sometimes it will be convenient to denote our
incidence structures by simply the set of blocks $\mathscr{B}$ (the
points can be deduced from the elements of the blocks). For instance,
the incidence structure $\mathscr{B} = \{(0, 1, 2)\}$ represents the
configuration of points in which $P_0, P_1, P_2$ are collinear
(and are the only triplet of collinear points) and it is isomorphic to
the incidence
structure given by $\mathscr{B}' = \{(3, 4, 7)\}$.

The cardinality $l$ of the set $\mathscr{B}$ will be called the \emph{level}
of the incidence structure.
Given an incidence structure $\mathscr{B}$ of level $l$, to obtain a new 
incidence structure describing possible collinearities of the points of
level $l+1$ it suffices to add to $\mathscr{B}$ a block
$(i, j, k)$ with the precaution though that $(i, j, k)$ has at most one
element in common with the blocks of $\mathscr{B}$ (otherwise we would
have more than $3$ collinear points in the configuration of $P_0, \dots, P_8$).
This remark allows to obtain the following algorithm (see also~\cite{blt})
which finds all the
possible configurations $\mathcal{C}_l$ of 9 points of the plane of level
$l$ for all possible levels, in which at most triplets of points are collinear. 
Since with 9 points we can form at most $b=\binom{9}{3}=84$ triplets,
termination of the algorithm is guaranteed.

\smallskip
\noindent 
\textbf{Algorithm}{ Construction of incidence structures of each level}.\\
   \texttt{Output:} A list $\mathcal{C}_1, \mathcal{C}_2, \dots, $ such that
   $\mathcal{C}_l$ is the list of all incidence structures (up to isomorphisms)
   of 9 points in the plane in which there are $l$ triplets of collinear points
   (but no 4 collinear points).\smallskip\\
   Let $\mathcal{C}_1= [\left\{(0,1,2)\right\}]$ \\
   Let $T=\left\{(i,j,k) \mid 0\leq i \leq 6,\ i+1\leq j \leq 7,\ j+1 \leq k
   \leq 8\right\}$ be the set of all the triplets that can be formed
   with $9$ points.\\
   For each $l=1,2,\dots,$ do\\
   \null\quad Set $\mathcal{C}_{l+1} = [\ ]$\\
   \null\quad For each $\mathscr{B}\in \mathcal{C}_l$ do\\
   \null\quad\quad For each $\tau\in T$ do\\
   \null\quad \quad \quad If $\tau$ has at most one point in common with each element of
   $\mathscr{B}$ then\\
   \null\quad \quad \quad \quad Let $\mathscr{B}' = \mathscr{B} \cup \{\tau\}$\\
   \null\quad \quad \quad \quad If $\mathscr{B}'$ is not isomorphic to any element
   of $\mathcal{C}_{l+1}$ then\\
   \null\quad \quad \quad \quad \quad Add $\mathscr{B}'$ to $\mathcal{C}_{l+1}$\\
   \null\quad If $\mathcal{C}_{l+1} = [\ ]$ then\\
   \null\quad \quad Set $m = l$\\
   \null\quad \quad Return the list $\mathcal{C}_l, l = 1, 2, \dots, m$.
   \medskip

 \begin{rem} The main loop, as said, can be repeated at most $84$ times and from this we
   get the termination of the algorithm, however, as soon as we have
   that from the list
   of incidence structures $\mathcal{C}_l$ we do not obtain any
   incidence structure of level
   $l+1$, the algorithm has produced all the possible incidence structures and can stop.
   In particular, in our case we get that the main loop stops with the value $m = 12$.
 \end{rem}

 The results of the algorithm are summarized by the first two lines of
 table~\ref{figFin2}. The first values of $\mathcal{C}_l$ are
 (see figure~\ref{fig2}).
\begin{eqnarray*}
  \mathcal{C}_1 &=& \left[\{012\}\right] \\
  \mathcal{C}_2 &=& \left[\{012, 345\},\{012, 034\}\right]\\
  \mathcal{C}_3 &=& \left[\{012, 034, 056\}, \{012, 034,
    135\}, \{012, 034, 156\}, \{012, 034, 567\},\right.\\
    & & \left. \{012, 345, 678\}\right] 
\end{eqnarray*}

\begin{figure}
\begin{center}
  \resizebox{0.7\textwidth}{!}{
   \includegraphics{./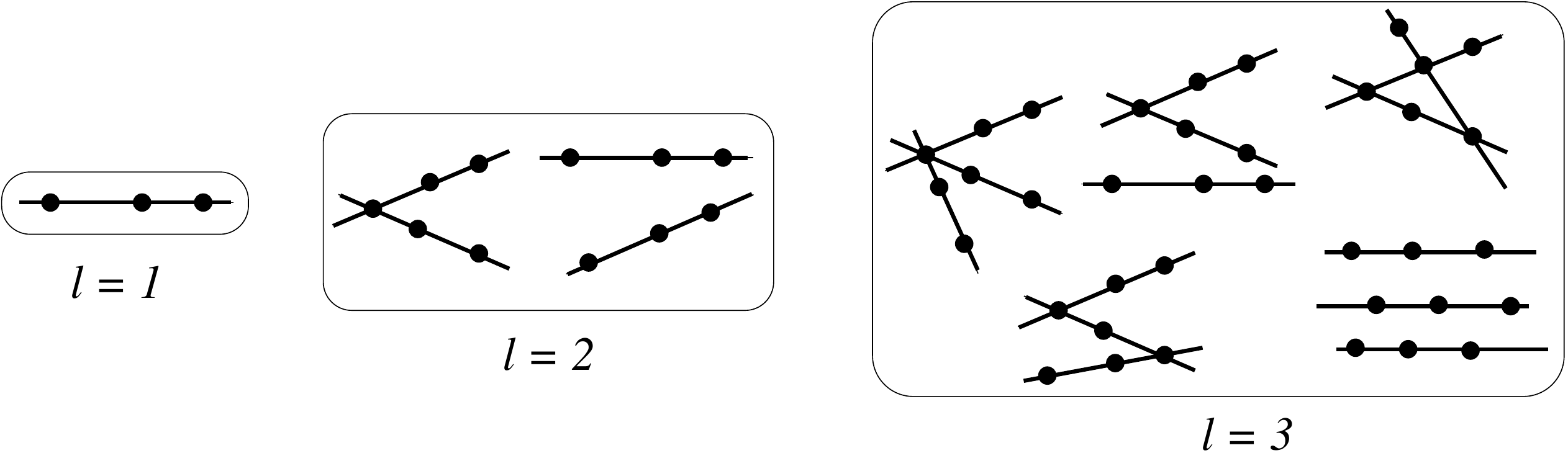}
  }
\end{center}
\caption{First three levels of incidence structures}
\label{fig2}
\end{figure}

\section{Realizability of the configurations in the plane}
We sketch here how to see which of the several incidence structures can be realized in the
projective plane $\mathbb{P}^2_K$ ($K$ a suitable algebraic
extension of $\mathbb{Q}$). As it is clear from figure~\ref{fig2},
the configurations of level $l = 1, 2, 3$ can surely be obtained in $\mathbb{P}^2_\mathbb{Q}$.
It turns out that all the other incidence structures representing the possible
configurations of points contain the blocks $(0, 1, 2), (0, 3, 4)$. Hence the
points $P_1, P_2, P_3, P_4$ are in general position and, up to a projective
motion, their coordinates can be fixed (therefore also the coordinates of $P_0$ are determined).
In particular, we can choose for $P_0, \dots, P_4$ the following points:
\begin{equation}
  P_0=(0\!:0\!:1),\ P_1=(1\!:0\!:1),\ P_2=(2\!:0\!:1),\ P_3=(0\!:1\!:1),\
  P_4=(0\!:2\!:1)
\label{5punti}
\end{equation}
We can now assign coordinates to the remaining points (depending on new variables
$t_0, t_1, \dots$). Any collinearity $(i, j, k)$ among three points $P_i, P_j, P_k$
can now be converted into the equation
given by imposing that the determinant of the matrix whose rows are the coordinates
of $P_i, P_j, P_k$, is zero. In this way we get an ideal $I$ in the polynomial ring
$\mathbb{Q}[t_0, t_1, \dots]$ and we have to study its zeros. Of course, in order to
make computations, it is quite important to keep the number of new variables as
reduced as possible. For instance, if we have the collinearity $(1, 3, 5)$, the point
$P_5$ can be expressed as $P_1+tP_3$ and its coordinates depend only on one new
variable $t$ (and $t$ is not zero, since we consider only distinct points). 
\begin{rem}
  In order to speed up the computations, to prove that a configuration is realizable
  in $\mathbb{P}^2_{\mathbb{Q}}$, it
  suffices to give random values to some of the variables and check if
  nevertheless a solution can be found. If this happens, we have avoided to consider the
  general case that could require cumbersome computations. Of course, the cases in
  which the configuration is not realizable in the plane or requires an algebraic
  extension of the field $\mathbb{Q}$, cannot be treated in this way.
  \label{oss1}
\end{rem}
It holds:
\begin{thm}
  Among the configurations given in the second line of table~\ref{figFin2} we have that the 
  configurations represented by the following blocks are not realizable:
  \begin{itemize}
  \item Level 7: $\mathscr{B}_0 = \{012, 034, 056, 135, 146, 236, 245\}$
    (the ``Fano plane $7_3$'');
    \item Level 8: $\mathscr{B}_0 \cup \{078\}$;
    \item Level 10: $\{012, 034, 056, 078, 135, 146, 237, 248, 368, 457\}$.
  \end{itemize}
  The following configurations can be realized but one (or more) further collinearities
  appear (written in bold):
  \begin{itemize}
  \item Level 8: $\{012, 034, 056, 137, 158, 248, 267, 368\}, $
    further collinearity: $\mathbf{457}$;
  \item Level 9: $\mathscr{B}_1 = \{012, 034, 056, 078, 135, 147, 168, 238, 246\}$,
    further collinearity: $\mathbf{257}$;
  \item Level 10: $\mathscr{B}_1 \cup \{367\}$, further collinearities:
    $\mathbf{257, 458}$;
  \item Level 11: $\mathscr{B}_1 \cup \{367, 257\}$, further collinearity:
    $\mathbf{458}$;
  \end{itemize}
  Finally, the following configurations are realizable but
  in $\mathbb{P}^2_K$, where $K$ is an algebraic extension of $\mathbb{Q}$
  (written on the right):
  \begin{itemize}
  \item Level 8:
    $\mathscr{B}_2 = \{012, 034, 056, 135, 147, 246, 257, 367\}$, 
    ($K=\mathbb{Q}[\sqrt{-3}]$);
  \item Level 9:
    $\{012, 034, 056, 078, 135, 147, 168, 367, 458\}$, 
    ($K=\mathbb{Q}[\sqrt{-3}]$);
  \item Level 10:
    $\{012, 034, 056, 078, 135, 146, 237, 258, 368, 457\}$,
    ($K=\mathbb{Q}[\sqrt{-1}]$);
  \item Level 12: $\mathscr{B}_2 \cup \{078, 168, 238, 458\}$,
    ($K=\mathbb{Q}[\sqrt{-3}]$).
  \end{itemize}
  All the other configurations are realizable in $\mathbb{P}^2_{\mathbb{Q}}$.
\end{thm}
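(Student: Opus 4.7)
My plan is to treat all four bullet groups via a single uniform set-up. Following the paragraph preceding the theorem, I fix $P_0,\dots,P_4$ as in~(\ref{5punti}) and, for each incidence structure $\mathscr B$, use as many of its blocks involving these five points as possible in order to express $P_5,\dots,P_8$ with the minimum number of new parameters $t_0,t_1,\dots$: e.g.\ a block $(1,3,5)\in\mathscr B$ reduces $P_5$ to $P_1+tP_3$ and introduces only one unknown. Each remaining block $(i,j,k)\in\mathscr B$ is then converted into the vanishing of the corresponding $3\times 3$ coordinate determinant, producing an ideal $I_{\mathscr B}\subset\mathbb Q[t_0,t_1,\dots]$. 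To discard spurious solutions in which two of the $P_i$ coincide or three a priori non-collinear points happen to line up, I saturate $I_{\mathscr B}$ by the product of all the forbidden polynomials (pairwise coordinate differences of the $P_i$, and determinants of all triples $(i,j,k)\notin\mathscr B$). Call the result $J_{\mathscr B}$; a configuration of the prescribed type exists in $\mathbb P^2_K$ exactly when $V(J_{\mathscr B})(K)\neq\emptyset$.

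With this in place, each bullet reduces to a Gr\"obner-basis computation. For the non-realizability items (the Fano $7_3$ and its two extensions at levels $8$ and $10$) I verify that $J_{\mathscr B}=(1)$, so the variety is empty over every extension field; the Fano case can alternatively be cited as the classical non-representability of $(7_3)$ in characteristic $\neq 2$, and the two larger configurations follow a fortiori because they contain $\mathscr B_0$. For the ``further collinearity'' items I first check $J_{\mathscr B}\neq(1)$, so that the configuration does exist, and then test ideal membership of the determinant associated with the alleged extra block, verifying it lies in $J_{\mathscr B}$. For the field-extension items I eliminate all but one parameter from $J_{\mathscr B}$ and read off a univariate factor $t^2+1$ or $t^2+t+1$, forcing $K=\mathbb Q[\sqrt{-1}]$ or $K=\mathbb Q[\sqrt{-3}]$ respectively; the saturation step guarantees that no rational root survives. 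For all the remaining incidence structures, Remark~\ref{oss1} applies: random rational specialisations of some of the parameters reduce $J_{\mathscr B}$ to a system whose rational zero locus is non-empty, exhibiting a concrete realisation over $\mathbb Q$.

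The main obstacle is not any single case but the bookkeeping of the non-degeneracy conditions: if the saturation with respect to some ``forbidden'' determinant or coordinate difference is omitted, the resulting Gr\"obner basis will conflate genuine configurations with those that collapse or acquire additional collinearities, blurring the distinction between \emph{not realisable}, \emph{realisable with extra collinearity} and \emph{realisable only over a quadratic extension}. Carrying out the saturations carefully for each of the incidence structures of the second line of Table~\ref{figFin2}, with the assistance of~\cite{sage} and~\cite{CoCoA-5}, is therefore the heart of the proof.
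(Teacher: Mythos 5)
Your proposal follows essentially the same route as the paper: fix $P_0,\dots,P_4$ as in~(\ref{5punti}), parametrize the remaining points using blocks through them, translate the leftover blocks into determinant conditions to form an alignment ideal, and then decide each case by Gr\"obner-basis computations (ideal $=(1)$ for non-realizability, ideal membership of the extra determinant for forced collinearities, elimination for the quadratic-extension cases, random specialization per Remark~\ref{oss1} for the rest); your added insistence on saturating away coincident points and spurious collinearities is a sensible sharpening of what the paper does implicitly. One side remark is wrong, though it does not create a gap since your primary argument is the direct computation: the level-$10$ non-realizable configuration $\{012,034,056,078,135,146,237,248,368,457\}$ does \emph{not} contain $\mathscr{B}_0$ (it lacks $236$ and $245$, and one can check it contains no Fano subconfiguration up to relabelling either), so the ``a fortiori'' deduction from the Fano plane applies only to the level-$8$ case $\mathscr{B}_0\cup\{078\}$, not to both larger configurations.
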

\begin{proof} As soon as we have assigned coordinates to the points, the proof is
  quite straightforward. Here we prove that the configuration
  given by $\mathscr{B}_0$ is not realizable.
  Since we have the collinearity $(1, 3, 5)$, we get that
  $P_5 = P_1+t_0P_3 =(1\!: t_0\!: t_0 + 1)$. Analogously, from the collinearity
  $(2, 3, 6)$, we get $P_6 = (2\!: t_1\!: t_1 + 1)$. The collinearity $(0, 5, 6)$ is satisfied
  if $2t_0 - t_1=0$, $(1, 4, 6)$ holds if $t_1 - 2=0$ and $(2, 4, 5)$ holds if $t_0+1=0$.
  The alignment ideal associated to the blocks of $\mathscr{B}_0$ is therefore
  $I = (2t_0 - t_1, t_1 - 2, t_0+1)$ whose Gr\"obner basis is $\{1\}$, hence the configuration
  is not realizable in $\mathbb{P}^2_K$ where $K$ is any field of characteristic zero.
  Let us see the case given by the incidence structure of level 10 whose blocks are the set
  $\mathscr{B}_3 = \mathscr{B}_1\cup \{367\}$. Here the coordinates of the
  points can be the following:
  $ P_5=(1\!: t_0\!: t_0 + 1)$, $P_6 = (2\!: 2t_1\!: t_1 + 1)$,
  $P_7 = (1\!: 2t_2\!: t_2 + 1)$,
  $P_8 =  (2\!: t_3\!: t_3 + 1)$, hence the alignment ideal is
  $I = (t_3^2 - 2t_3 + 4, t_0 - 1/2t_3 + 1, t_1 - 1/2t_3 + 1, t_2 - 1/4t_3)$. The block
  $(2, 5, 7)$ gives the condition $2t_0t_2 - t_0 + 2t_2$ and the block $(4, 5, 8)$ gives
  the condition $2t_0 - t_3 + 2$ and both are in the ideal $I$, this shows that when the
  configuration given by $\mathscr{B}_3$ is realized in a plane $P^2_K$, then necessarily
  there are two further collinearities. The other cases can be solved in a similar way,
  although many computations can be simplified thanks to remark~\ref{oss1}.
\end{proof}
As a consequence of the theorem, we can complete table~\ref{figFin2} with the third
line.

\begin{rem}
An alternative proof can be obtained with the techniques developed in~\cite{bs}.
\end{rem}

\section{Points on a cubic curve}
In this Section we address the problem of determining which of the
configurations of
points described in the third line of table~\ref{figFin2}
can stay on an irreducible cubic curve of the plane.

Given a configuration of a $9$-tuple of points represented by
an incidence structure $(\mathscr{B}, \mathscr{P})$, it is often easy to see
if the nine points of $\mathscr{P}$ can lay on an irreducible cubic curve:
it is enough to take the parametrization of the points
and the alignment ideal constructed in the
previous section, to give some specific values
to the parameters in such a way that the prefigured
collinearities are satisfied (and no more appear) and finally to compute the
cubic curve through the nine points. If it exists and is irreducible,
we are done. There are however several exceptions
that can happen, it is indeed possible that in the set of blocks $\mathscr{B}$
there are three blocks that
contain all the nine points. In this case a cubic passing through the nine points
is given by the three
lines through the three blocks, hence is reducible. Therefore, in a situation like
this, we have to see if there exists also an irreducible cubic or if it is possible to
find a specific value of the parameters such that the points are contained in
an irreducible cubic curve.
We distinguish the following cases:

\begin{enumerate}
\renewcommand*{\theenumi}{\textnormal{(\alph{enumi})}}
\renewcommand*{\labelenumi}{\theenumi}
\item\label{caso1} It is possible to find an irreducible cubic curve through
  the points which satisfy the prescribed alignment.
\item\label{caso2}
  If an irreducible cubic curve contains the points, then necessarily other
  collinearities among the points appear, so the configuration belongs to a higher
  level.
\item\label{caso3}
  There does not exist an irreducible cubic curve passing through the points,
  so the configuration has to be discarded.
\end{enumerate}

The following well known result (a particularization of the Cayley-Bacharach theorem)
will be a very useful tool:

\begin{prop}
  If two cubic curves intersect in $9$ distinct points and there is a conic
  passing through $6$ of them, then the remaining $3$ points are collinear.
\label{prop:Noether}
\end{prop}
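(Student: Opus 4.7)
The plan is to exhibit a reducible cubic through eight of the nine intersection points and then invoke the classical principle that any cubic through eight of the nine base points of a cubic pencil passes through the ninth. I would first set notation: let $F_1,F_2$ be defining equations of the given cubics $C_1,C_2$, so that $C_1\cap C_2=\{P_1,\dots,P_9\}$; relabel so that the conic $Q$, with equation $q$, contains $P_1,\dots,P_6$; and let $L$, with equation $\ell$, be the line joining $P_7$ and $P_8$.

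The key construction is the reducible cubic $V(q\cdot\ell)$. It vanishes at $P_1,\dots,P_6$ because $q$ does, and at $P_7,P_8$ because $\ell$ does, so it passes through eight of the nine base points of the pencil $\lambda F_1+\mu F_2$. By the Cayley--Bacharach theorem, $q\cdot\ell$ must then also vanish at $P_9$. Since by hypothesis $P_9\notin Q$ (only six of the nine points lie on $Q$), necessarily $P_9\in L$, which is precisely the claimed collinearity of $P_7,P_8,P_9$.

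If one prefers not to invoke Cayley--Bacharach as a black box, the same conclusion can be obtained by a direct pencil argument: pick an auxiliary point $R\in L$ distinct from $P_1,\dots,P_9$, and choose $\lambda,\mu$ so that $F=\lambda F_1+\mu F_2$ vanishes at $R$. Then $F$ meets $L$ in at least three points, so by B\'ezout $\ell$ divides $F$ and $F=\ell\cdot q'$ for some conic $q'$ passing through $P_1,\dots,P_6$. When those six points determine the conic uniquely, $q'=q$, and then $F=\ell\cdot q$ vanishing at $P_9$ forces $P_9\in L$ (again using $P_9\notin Q$).

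The hard part, and the reason one should really quote the full Cayley--Bacharach statement, is controlling the degenerate subcases of the self-contained version: if $P_1,\dots,P_6$ do not determine a unique conic (for example if three of them are collinear and $Q$ is reducible) then the identification $q'=q$ in the pencil argument can fail and a separate analysis is required. The general Cayley--Bacharach theorem handles all such configurations uniformly, which is why the proof above reduces to a one-line application of it.
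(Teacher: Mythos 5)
Your argument is correct: the paper itself gives no proof beyond the citation ``see Fulton, Ch.~5, Prop.~2,'' and what you write is precisely the standard argument behind that reference --- form the reducible cubic $V(q\cdot\ell)$ through eight of the nine base points and apply the ``eight implies nine'' form of Cayley--Bacharach, noting that $P_9\notin Q$ (which in fact holds automatically, since a conic sharing no component with both cubics can meet the nine-point complete intersection in at most six points). Your closing remark about the degenerate subcases of the self-contained pencil argument is also accurate; nothing is missing.
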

\begin{proof}
See \cite{fu}, proposition $2$ of chapter $5$.
\end{proof}

As a consequence, we have:

\begin{lem}
  Let $(\mathscr{B}, \mathscr{P})$ be an incidence structure such that $\mathscr{B}$ contains
  three blocks $i_1j_1k_1$, $i_2j_2k_2$, $i_3j_3k_3$ containing all the
  nine points $P_0, \dots, P_8$ and suppose that $\mathscr{B}$ contains two other blocks
  $B_1 =l_1m_1n_1$ and $B_2=l_2m_2n_2$ which are disjoint. Let 
  $B_3 = l_3m_3n_3$ be the triplet given by the points which are not in $B_1\cup B_2$.
  Then, if $B_3 \not \in \mathscr{B}$, it is not possible to have an irreducible
  cubic curve passing through the nine points satisfying only the alignments
  of $\mathscr{B}$. Moreover, in this hypothesis, we can distinguish two cases:
  \begin{enumerate}
  \item If $B_3$ has more than one element in common with some of the blocks of $\mathscr{B}$,
    then there are no irreducible cubics through the points;
  \item If $B_3$ has at most one element in common with the blocks of $\mathscr{B}$,
    then an irreducible cubic through the points may exist, but the points have to
    satisfy also the collinearity given by $B_3$.
  \end{enumerate}
  \label{lemma1}
\end{lem}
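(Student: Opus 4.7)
The plan is to use the reducible cubic formed by the three lines through $i_1j_1k_1$, $i_2j_2k_2$, $i_3j_3k_3$, together with Proposition~\ref{prop:Noether}, to force the collinearity of $B_3$. The key observation is that the three blocks partition $\mathscr{P}$, so the three lines through them form a cubic $C_1$ (necessarily reducible) passing through all nine points.

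Suppose now that an irreducible cubic $C_2$ also passes through $\mathscr{P}$. Since $C_2$ is irreducible of degree three, no line of $C_1$ can be a component of $C_2$, so by B\'ezout's theorem the intersection $C_1 \cap C_2$ consists of exactly nine points counted with multiplicity; as the nine points of $\mathscr{P}$ are already in the intersection, each is a simple intersection point. The two lines through the disjoint blocks $B_1$ and $B_2$ form a (possibly reducible) conic passing through the six points of $B_1 \cup B_2$, so Proposition~\ref{prop:Noether} applies and forces the three remaining points, namely those of $B_3$, to be collinear. In particular, if $B_3 \notin \mathscr{B}$, any irreducible cubic through $\mathscr{P}$ would produce an alignment outside $\mathscr{B}$, giving the first part of the lemma.

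For the dichotomy I would argue as follows. If $B_3$ shares two points $P,Q$ with some block $B \in \mathscr{B}$, then the previous paragraph forces $B_3$ to lie on the line $\overline{PQ}$, which is also the line through $B$; thus four of the nine points are collinear, contradicting the standing hypothesis that configurations admit at most triplets of collinear points. Hence case~(1) rules out the existence of any irreducible cubic through $\mathscr{P}$. In case~(2), by contrast, $\mathscr{B} \cup \{B_3\}$ is a legitimate incidence structure of level $|\mathscr{B}|+1$, so the argument does not exclude an irreducible cubic but only shows that such a cubic must realize the enlarged configuration.

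The only technical point to check carefully is the transversality hypothesis in Proposition~\ref{prop:Noether}, that is, that the nine points are distinct intersection points of $C_1$ and $C_2$. This follows from B\'ezout once the three lines of $C_1$ are known to be distinct, and distinctness is itself immediate from the ``no four collinear'' assumption: two coinciding component lines would put six of the nine points on one line. Apart from this verification, the proof is a direct combinatorial reading of Cayley--Bacharach.
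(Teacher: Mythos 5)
Your argument is correct and follows essentially the same route as the paper: form the reducible cubic from the three partitioning blocks, pair it with the hypothetical irreducible cubic, apply Proposition~\ref{prop:Noether} to the conic through $B_1 \cup B_2$ to force the collinearity of $B_3$, and then split into the two cases according to whether $B_3$ meets a block of $\mathscr{B}$ in two points (four collinear points, impossible) or not. Your extra verification that the nine points are distinct transversal intersection points is a detail the paper leaves implicit, but it does not change the substance of the proof.
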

\begin{proof}
  Let $C_0$ be the cubic curve which splits into the three lines passing
  through the points
  which contain, respectively, $\{P_{i_1},  P_{j_1}, P_{k_1}\}$,
  $\{P_{i_2},  P_{j_2}, P_{k_2}\}$,
  $\{P_{i_3},$  $P_{j_3},$ $P_{k_3}\}$, and suppose $C$ is an irreducible
  cubic through the
  nine points. Let $D$ be the conic which splits into the lines which contain
  $\{P_{l_1},  P_{m_1}, P_{n_1}\}$ and  $\{P_{l_2},  P_{m_2}, P_{n_2}\}$.
  Then, by Proposition~\ref{prop:Noether},
  the points $\{P_{l_3},  P_{m_3}, P_{n_3}\}$ are collinear. If, moreover,
  there is a block      
  $B \in \mathscr{B}$ which has more than one point in common
  with $B_3$, then among the
  points $P_i$'s there are 4 collinear points on $C$, which is impossible.
\end{proof}

Lemma~\ref{lemma1} allows to rule out several configurations:
\begin{exmp}
Consider the incidence structure of level $7$:
\[
\mathscr{B} =\{012, 034, 056, 135, 147, 238, 267\}
\]
which is realizable in $\mathbb{P}^2_{\mathbb{Q}}$.
Here $056, 147, 238$ are three blocks of $\mathscr{B}$ which contain
all the $9$ points. From the blocks $034, 267$ we get that also the points
$P_1, P_5, P_8$ must be collinear, but the block $158$ has two points in common with
the block $135 \in \mathscr{B}$, so the four points $P_1, P_3, P_5, P_8$ should
be collinear, which is not possible: the above configuration has to be discarded.\\
Consider now the incidence structure (again of level $7$):
\[
\mathscr{B} =\{012, 034, 056, 137, 158, 248, 368\}
\]
Here $056, 137, 248$ give a reducible cubic. {From} the conic given by
$012, 368$ we see that the block $457$ has to be added to $\mathscr{B}$;
similarly, the two blocks $034, 158$ give the new
block $267$. Therefore, if nine points are on an irreducible cubic curve $C$
and satisfy the collinearities given by $\mathscr{B}$, then necessarily the
points have also to satisfy the collinearities $457$ and $267$.
So far, we do not know yet that $C$ exists, but we do know that the set of blocks $\mathscr{B}$
cannot be considered in level $7$. We can however verify that the incidence structure
$\mathscr{B} \cup \{267, 457\}$ is isomorphic to an incidence structure of level 9.
\end{exmp}

Lemma~\ref{lemma1} allows to find cases of type~\ref{caso3}
and to have candidates for cases of type~\ref{caso2} of the previous list, but we still
have to see if there are other configurations that, for some reason, cannot exist or
do not exist in the expected level.

Consider the example given by the incidence structure (of level 7):
\[
\mathscr{B} =\{012, 034, 056, 135, 146, 367, 458\}
\]
  Lemma~\ref{lemma1} does not give information, so in order to see if there exists
an irreducible cubic passing through the $9$ points of the above incidence structure,
we need a different approach. The generic points
of the corresponding configuration are $P_0, P_1, P_2, P_3, P_4$ given
in~(\ref{5punti}) and:
\[
\begin{array}{l}
  P_5= (1\!:t_0\!: t_0 + 1),\ P_6= (1\!: 2t_1\!: t_1 + 1),\
  P_7= (t_2\!: 2t_1t_2 + 1\!: t_1t_2 + t_2 + 1),\\
  P_8= (t_3\!: t_0t_3 + 2\!: t_0t_3 + t_3 + 1) 
\end{array}
\]

It is easy to verify that, with the condition $t_0-2t_1 = 0$, on the parameters
(i.e. in this
case the alignment ideal is $(t_0-2t_1)$), the points satisfy
the collinearities of the set of blocks $\mathscr{B}$.
In general, the only cubic containing these points is reducible and
comes from the blocks
$012, 367, 458$, hence we have to find conditions on the coordinates of the
points in order to have other cubic
curves. We consider the linear system of cubic curves
passing through $P_0, \dots, P_4$, which is:
\begin{equation}
  ax^3 + bx^2y + cxy^2 + (1/2)dy^3 - 3ax^2z + exyz - (3/2)dy^2z + 2axz^2 + dyz^2=0
  \label{eq:cubic}
\end{equation}
The condition that the remaining $4$ points satisfy $(\ref{eq:cubic})$ gives
a system of four linear, homogeneous equations in the variables $a, b, c, d, e$
whose coefficients are polynomials in $t_0, t_1, \dots$. In order to have other solutions,
the rank of the matrix $M$ associated to the system must be less than $4$.
Hence we collect the order $4$ minors of $M$ and we get a new ideal
(in $t_0, t_1, \dots$) that has to be added to the alignment ideal of the points.
After some manipulation (the ideal can be saturated w.r.t.\ the variables) we get the
ideal $J$:
\[
J = \left(2t_1^2t_2t_3 - 4t_1^2t_2 + 2t_1^2t_3 - 1, t_0 - 2t_1 \right)
\]
A zero of $J$ is for instance
$t_0 = 6, t_1 =3, t_2=-13/18, t_3=-5$  which gives the points:
$P_5 =  (1\!: 6\!: 7)$, $P_6 = (1\!: 6\!: 4)$, $P_7=(13\!: 60\!: 34)$,
$P_8= (5\!: 28\!: 34)$.
The collinearities satisfied by these points are precisely those of the incidence
structure $\mathscr{B}$. A cubic curve containing the $9$ points is:
\begin{equation*}
456x^3 - 78x^2y + 623xy^2 - 26y^3 - 1368x^2z - 1340xyz + 78y^2z + 912xz^2 - 52yz^2
\end{equation*}
which is irreducible (and smooth). In conclusion, concerning this case, we can find
nine points which satisfy the described alignments and that are contained on an irreducible
cubic curve. This configuration is of type~\ref{caso1}.

These kind of computations can be done for each of the cases we have to consider and,
together with lemma~\ref{lemma1}, allow to obtain the following conclusion:

\noindent
\emph{Level 1, 2, 3, 4}: there are no restrictions.  \\
\emph{Level 5}. There is one incidence structure of type~\ref{caso2} (in black the
further alignment):
\begin{itemize}
\item $012$, $034$, $156$, $278$, $357$, $\mathbf{468}$, (it belongs to level 6).
\end{itemize}

\noindent
\emph{Level 6}. There is one incidence structure of type~\ref{caso2} (in black the
further alignment):
\begin{itemize}
\item $012$, $034$, $056$, $137$, $158$, $248$, $\mathbf{267}$, (it belongs to level 7);
\end{itemize}
and one incidence structure of type~\ref{caso3}:
\begin{itemize}
\item $012$, $034$, $056$, $137$, $248$, $578$.
\end{itemize}

\noindent
\emph{Level 7}. The incidence structures of type~\ref{caso2} are (in black the further
alignments): 
\begin{itemize}
\item $012$, $034$, $056$, $078$, $135$, $147$, $238$, $\mathbf{246}$, (it belongs to level 8);
\item  $012$, $034$, $056$, $135$, $147$, $238$, $246$, $\mathbf{078}$, (it belongs to level 8);
\item  $012$, $034$, $056$, $137$, $158$, $248$, $368$, $\mathbf{267}$, $\mathbf{457}$,
  (it belongs to level 9);
\end{itemize}
The incidence structures of type~\ref{caso3} are:
\begin{itemize}
\item $012$, $034$, $056$, $135$, $147$, $238$, $267$;
\item $012$, $034$, $056$, $135$, $147$, $238$, $678$;
\item $012$, $034$, $056$, $137$, $158$, $248$, $467$.
\end{itemize}

\noindent
\emph{Level 8}. The incidence structures of type~\ref{caso2} are:
\begin{itemize}
\item $012$, $034$, $056$, $078$, $135$, $147$, $168$, $238$,  $\mathbf{246}$,
  $\mathbf{257}$, (it belongs to level 10);
\item $012$, $034$, $056$, $078$, $135$, $147$, $238$, $257$, $\mathbf{168}$,
  $\mathbf{246}$, (it belongs to level 10);
\end{itemize}
the incidence structures of type~\ref{caso3} are:
\begin{itemize}
\item  $012$, $034$, $056$, $078$, $135$, $146$, $367$, $458$;
\item $012$, $034$, $056$, $078$, $135$, $147$, $238$, $267$;
\item $012$, $034$, $056$, $135$, $146$, $278$, $367$, $458$;
\item $012$, $034$, $056$, $135$, $147$, $238$, $246$, $578$;
\item $012$, $034$, $056$, $135$, $147$, $238$, $267$, $468$.
\end{itemize}

\noindent
\emph{Level 9}. The incidence structures of type~\ref{caso2} are:
\begin{itemize}
\item $012$, $034$, $056$, $078$, $135$, $147$, $168$, $367$, $458$, 
  $\mathbf{246}$, $\mathbf{257}$, $\mathbf{238}$, (it belongs to level 12);
\item $012$, $034$, $056$, $078$, $135$, $147$, $168$, $238$, $367$,  
$\mathbf{246}$, $\mathbf{257}$, $\mathbf{458}$, (it belongs to level 12);
\end{itemize}
the incidence structures of type~\ref{caso3} are:
\begin{itemize}
\item  $012$, $034$, $056$, $078$, $135$, $146$, $237$, $368$, $457$;
\item $012$, $034$, $056$, $078$, $135$, $147$, $238$, $257$, $468$;
\item $012$, $034$, $056$, $135$, $147$, $238$, $267$, $468$, $578$;
\end{itemize}

\noindent
\emph{Level 10}. The incidence structure of type~\ref{caso3} is:
\begin{itemize}
\item $012$, $034$, $056$, $078$, $135$, $146$, $237$, $258$, $368$, $457$;
\end{itemize}
  
\bigskip
In particular, we have the following conclusion:
\begin{thm}
  Let $P_0, \dots, P_8$ be $9$ points in the projective plane $\mathbb{P}_{\mathbb{Q}}^2$
  laying on an irreducible
  cubic curve. If we distinguish the configurations of the points according to the
  possible collinearities
  satisfied by them (and we consider two configurations different if and only if the
  corresponding incidence structures are not isomorphic), we have
  that the number of possibilities is given by the last line of table~\ref{figFin2}.
  \label{teorema133}
\end{thm}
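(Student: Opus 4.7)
The plan is to start from the classification already obtained in Section~3 (summarized in the third line of table~\ref{figFin2}), that is, the list of incidence structures realizable in $\mathbb{P}^2_{\mathbb{Q}}$ as collinearity patterns of nine distinct points. For each such incidence structure $(\mathscr{B},\mathscr{P})$ we must decide whether the nine points can be taken on an irreducible cubic curve \emph{with exactly the prescribed collinearities} (case~\ref{caso1}), whether every such realization forces new collinearities (case~\ref{caso2}), or whether no irreducible cubic through the points can exist (case~\ref{caso3}). The final count in the last line of table~\ref{figFin2} is then obtained by removing from the list the configurations of type~\ref{caso3} and moving those of type~\ref{caso2} to the appropriate higher level (identifying the resulting enlarged incidence structure with a structure already present in that level, up to isomorphism).

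The first step is to apply Lemma~\ref{lemma1} systematically. For every $(\mathscr{B},\mathscr{P})$ we scan the subsets of three blocks of $\mathscr{B}$ covering all nine points; whenever such a triple exists together with two further disjoint blocks $B_1,B_2\in\mathscr{B}$, Proposition~\ref{prop:Noether} applied to the reducible cubic (three lines) and to the conic $B_1\cup B_2$ forces the complementary triple $B_3$ to be collinear. If $B_3$ shares more than one point with some block of $\mathscr{B}$, the configuration is immediately of type~\ref{caso3}; otherwise it is a candidate for type~\ref{caso2} with the additional block~$B_3$. This mechanical procedure, repeated as long as new forced blocks appear, disposes of all the examples displayed in the bulleted lists preceding the theorem (in each of them one need only exhibit, as in the two examples worked out just before and in the statement of Lemma~\ref{lemma1}, the three covering blocks and the conic that triggers Cayley--Bacharach).

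The second step addresses the incidence structures on which Lemma~\ref{lemma1} gives no information, i.e.\ those where no triple of blocks of $\mathscr{B}$ covers the nine points (or the forced collinearities are already contained in $\mathscr{B}$). For these we rely on the parametrization introduced in Section~3: fix $P_0,\dots,P_4$ as in~(\ref{5punti}) and, using the blocks of $\mathscr{B}$, express $P_5,\dots,P_8$ in terms of parameters $t_0,t_1,\dots$; let $I$ be the alignment ideal. The linear system~(\ref{eq:cubic}) of cubics through $P_0,\dots,P_4$ has dimension $5$; imposing passage through $P_5,\dots,P_8$ yields a $4\times 5$ matrix $M$ whose entries are polynomials in the $t_i$. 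A cubic through the nine points exists iff $\mathrm{rank}\,M\le 4$, and it is forced to be reducible (splitting off a line) precisely when $\mathscr{B}$ contains three blocks covering the nine points. In the remaining cases one adds to $I$ the ideal of $4\times 4$ minors of $M$, saturates with respect to the parameters to discard degenerate specializations, and exhibits a rational point of the resulting variety; substituting yields an explicit cubic whose irreducibility (and the absence of further collinearities among the nine points) is checked directly, exactly as illustrated for the level~$7$ structure $\{012,034,056,135,146,367,458\}$ in the discussion preceding the theorem.

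The main obstacle is not conceptual but bookkeeping: the procedure must be carried out for each of the incidence structures surviving Section~3, and the isomorphism test after moving configurations from type~\ref{caso2} to the higher level must be performed carefully (otherwise the counts in the last row of table~\ref{figFin2} would be off). This last verification is where most of the computer algebra effort with~\cite{sage} and~\cite{CoCoA-5} is spent; once it is done, collecting the surviving configurations level by level gives exactly the numbers displayed in the last line of the table, completing the proof.
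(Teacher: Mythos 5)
Your proposal follows essentially the same route as the paper: Lemma~\ref{lemma1} (i.e., the Cayley--Bacharach statement of Proposition~\ref{prop:Noether}) disposes of the type~\ref{caso2} and type~\ref{caso3} cases, and the remaining ones are settled exactly as in the paper by parametrizing $P_5,\dots,P_8$, collecting the order-$4$ minors of the $4\times 5$ matrix, saturating, and exhibiting explicit specializations carrying an irreducible cubic. The only slip is the phrase ``a cubic through the nine points exists iff $\mathrm{rank}\,M\le 4$'' --- that bound is automatic for a $4\times 5$ matrix, and the relevant condition for a cubic other than the reducible one is $\mathrm{rank}\,M\le 3$, which is precisely what adjoining the $4\times 4$ minors imposes --- but this does not affect the substance of the argument.
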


\begin{table}
\begin{tabular}{|c|c|c|c|c|c|c|c|c|c|c|c|c|} \hline
  Level &  1 & 2 & 3 & 4 & 5 & 6 & 7 & 8 & 9 & 10 & 11 & 12  \\ \hline
  \# inc.\ struct. & 1 & 2 & 5 & 11 & 19 & 34 & 41 & 31 & 12 & 4 & 1 & 1 \\ \hline
            \# realiz.\ conf. & 1 & 2 & 5 & 11 & 19 & 34 & 40 & 29${}^*$ & 11 & 2${}^{**}$ &
                                         0 & 1${}^*$ \\ \hline
 \# realiz.\ conf. on a cubic & 1 & 2 & 5 & 11 & 18 & 32 & 34 & 22${}^*$ & 6 & 1 &
                                         0 &  1${}^*$ \\ \hline
\end{tabular}\\
${}^*$ one to be realized needs the field $K = \mathbb{Q}[\sqrt{-3}]$\\
${}^{**}$ one to be realized needs the field $K = \mathbb{Q}[\sqrt{-1}]$\\
\caption{Line 2: Number of incidence structures of $9$ points in the plane;
  Line 3: number of realizable configurations of $9$ points in the plane;
  Line 4: number of configurations of $9$ points on a cubic curve.}
\label{figFin2}
\end{table}

\begin{rem} To complete the information of table~\ref{figFin2}, we
  add here that the configuration $012, 034, 156, 278, 357$ of level $5$
  which belongs to level $6$ when considered on a cubic curve (with the additional
  collinearity $468$) appears in~\cite{pm2}, page 50 and in~\cite{pm};
  the configuration of level~8 which needs $\mathbb{Q}[\sqrt{-3}]$
  to be realized is: 
  $012$, $034$, $056$, $135$, $147$, $246$, $257$, $367$ (this configuration is
  also known as the M\"obius-Kantor configuration),  
  the configuration
  of level 10 which needs $\mathbb{Q}[\sqrt{-1}]$ is:
  $012$, $034$, $056$, $078$, $135$, $146$, $237$, $258$, $368$, $457$ 
  and, as
  said above, it is of type~\ref{caso3} and cannot be contained in an irreducible
  cubic curve. Finally the configuration of level 12 (realizable in $\mathbb{Q}[\sqrt{-3}]$)
  is the Hesse configuration:
  $012$, $034$, $056$, $078$, $135$, $147$, $168$, $238$, $246$, $257$, $367$, $458$ 
  and is an extension
  of the above configuration of level 8.\\
  In figure~\ref{figuraC8} we show the example of the configuration
  $012$, $034$, $056$, $078$, $135$, $147$, $168$, $238$ of level $8$, introduced in
  the level $8$ of the previous list, 
  which can be realized in the plane, but that can be contained in an irreducible cubic
  curve only if point $5$ (of figure~\ref{fig5})
  coincides with point $5'$ and point $6$ coincides with
  point $6'$; this gives the only configuration  of level $10$ contained in a cubic curve
  (and contains the well known Pappus configuration).\\
  Note that, as a consequence of the above computations, we see that, when some
  collinearities of $9$ points on a cubic curve force some other collinearities, this
  can always be explained as an application of proposition~\ref{prop:Noether}.
\end{rem}

\begin{figure}
\begin{center}
\resizebox{4cm}{!}{
\includegraphics{./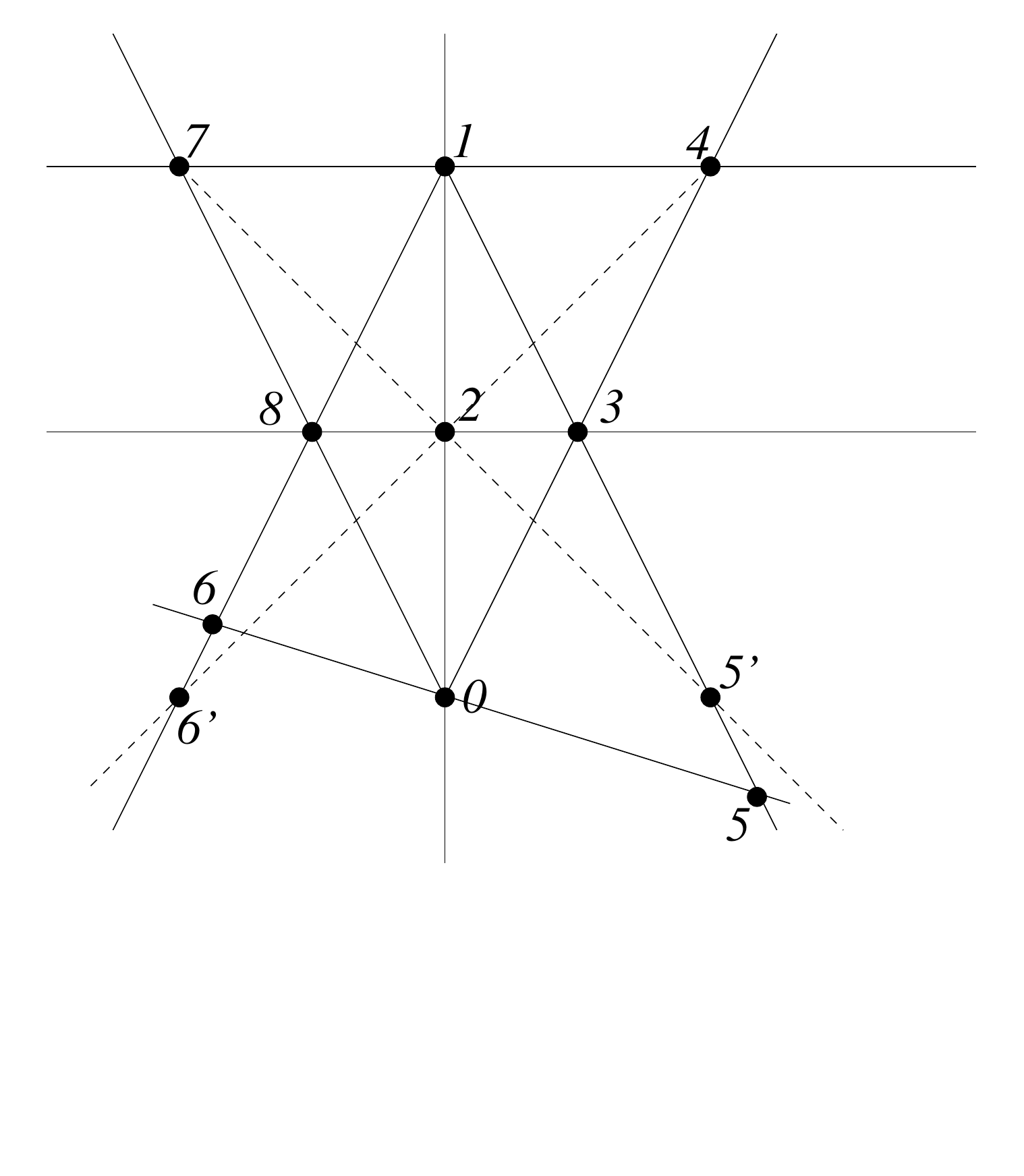}
}
\resizebox{5cm}{!}{
  \includegraphics{./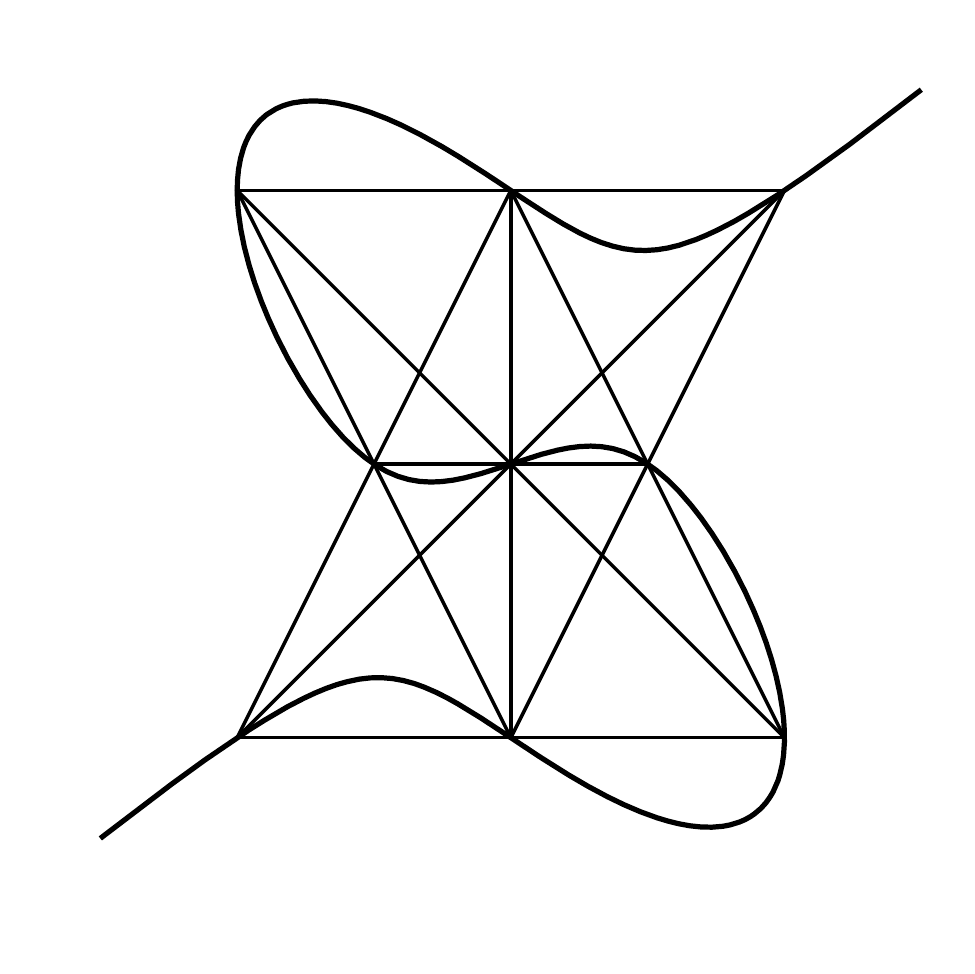}
}
\end{center}
\caption{The configuration $012$, $034$, $056$, $078$, $135$, $147$, $168$, $238$ 
  of level $8$ which contains two further collinearities when
  constrained on a cubic curve.}
\label{fig5}
\label{figuraC8}
\end{figure}

\section{The Hilbert functions of the nine points}
In the previous Section we classified points on an irreducible cubic curve,
according to the possible collinearities. It is therefore quite natural
to ask about the Hilbert functions of the ideal of the nine points of each
configuration. First, observe that the linear system of
nine points on an irreducible cubic curve
has dimension either $0$ or $1$ (see~\cite{tg} and~\cite{lu});
hence, if $I$ is the ideal of the nine points $P_0, \dots, P_8$, then
$\dim \left(R/I\right)_3 = 9$ or
$\dim \left(R/I)\right)_3 = 8$, where $R = K[x, y, z]$. 
Suppose $F \in R$ is the generic homogeneous polynomial of
degree $d \geq 3$, with coefficients $a_0, a_1, \dots, a_{m-1}$, 
$m= {d+2\choose d}$, and let $M(d, P)$ be the $9 \times m$
matrix whose rows are the coefficients of $a_0, a_1, \dots$ in $F(P_i)$
($i = 0, \dots, 8$). The matrix $M(d, P)$ has rank at most $9$. 
\begin{lem} If $M(d, P)$ has rank $9$, then also $M(d+1, P)$ has rank $9$.
  \label{lemma51}
\end{lem}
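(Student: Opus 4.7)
The plan is to reinterpret the rank condition as surjectivity of an evaluation map, and then lift any preimage from degree $d$ to degree $d+1$ by multiplying with a linear form.

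First I would recast the statement. The matrix $M(d,P)$ is (a matrix of) the evaluation map
\[
\varphi_d : R_d \longrightarrow K^9, \qquad F \longmapsto \bigl(F(P_0), F(P_1), \dots, F(P_8)\bigr),
\]
since its $i$-th row records the coefficients of $a_0, a_1, \ldots$ in $F(P_i)$. Saying that $M(d,P)$ has rank $9$ is then exactly saying that $\varphi_d$ is surjective, i.e.\ that the nine points impose independent conditions on forms of degree $d$. Our task reduces to: if $\varphi_d$ is surjective, then so is $\varphi_{d+1}$.

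Next I would build an explicit preimage under $\varphi_{d+1}$ of each standard basis vector $e_i \in K^9$. By hypothesis, for each $i = 0,\dots,8$ there exists $F_i \in R_d$ with $F_i(P_i) \neq 0$ and $F_i(P_j) = 0$ for $j \neq i$ (it suffices to pick any preimage of $e_i$ under $\varphi_d$ and rescale). Since each $P_i$ is a nonzero point of $\mathbb{P}^2_K$, at least one of the coordinate functions $x,y,z$ does not vanish on it; choose such a linear form $L_i \in R_1$ with $L_i(P_i) \neq 0$. Then the form $G_i := L_i F_i \in R_{d+1}$ satisfies
\[
G_i(P_i) = L_i(P_i)\,F_i(P_i) \neq 0, \qquad G_i(P_j) = L_i(P_j)\cdot 0 = 0 \;\text{ for } j \neq i.
\]
Hence $\varphi_{d+1}(G_i)$ is a nonzero scalar multiple of $e_i$, and so $e_i \in \operatorname{Im}(\varphi_{d+1})$ for every $i$. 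Consequently $\varphi_{d+1}$ is surjective, which means $M(d+1,P)$ has rank $9$.

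The argument has no real obstacle: the only thing to notice is that a linear form nonvanishing at a prescribed projective point always exists (given by one of the coordinate functions), which ensures that $L_i F_i$ preserves the selective vanishing of $F_i$. The same reasoning in fact shows monotonicity of the Hilbert function of the nine points: once it reaches the value $9$ in some degree, it stays at $9$ in every higher degree, which is the content we need for the subsequent analysis.
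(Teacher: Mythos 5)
Your proof is correct and rests on the same idea as the paper's: a degree-$d$ form with prescribed values at the nine points can be promoted to degree $d+1$ by multiplying with a linear form that does not vanish at the relevant point, so the image of the evaluation map cannot shrink. The paper packages this slightly differently --- it normalizes coordinates so that the single form $z$ works for all nine points at once and then observes that $M(d,P)$ sits inside $M(d+1,P)$ as a block of columns --- whereas you use point-dependent coordinate forms $L_i$ together with separating polynomials $F_i$, but the mechanism is identical.
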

\begin{proof}
  We can assume that (after a change of coordinates, if necessary)
  all the points $P_i$ have the last coordinate different from zero
  and hence can be chosen equal to $1$. Let
  $F = a_0x^d+a_1x^{d-1}y+a_2x^{d-1}z+\cdots$, then the generic polynomial
  of degree $d+1$ can be seen as $F\cdot z + H(x, y)$, where $H$
  is the generic homogeneous polynomial of degree $d+1$ in $x, y$. With
  these notations, the first $m$ columns of $M(d+1, P)$ are
  the matrix $M(d, P)$. {}From this, the result follows.
\end{proof}
This lemma allows to obtain the following:
\begin{thm}
  The Hilbert function of $R/I$, where $I$ is the ideal of points
  of the configurations described in
  theorem~\ref{teorema133} is either $0 \mapsto 1, 1\mapsto 3, 2 \mapsto 6,
  3\mapsto 9, 4 \mapsto 9, \dots$, if there
  exists only one irreducible cubic containing the points, or
  $0 \mapsto 1, 1\mapsto 3, 2 \mapsto 6,$
  $3\mapsto 8,$ $ 4 \mapsto 9,$ $ 5 \mapsto 9,  \dots$
  in the other case, when
  the linear system of the cubic curves
  through the points is of dimension $1$. 
\end{thm}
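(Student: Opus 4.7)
The plan is to compute $\dim(R/I)_d$ degree by degree, exploiting the fact that the nine points lie on an irreducible cubic and invoking Lemma~\ref{lemma51} to propagate the value $9$ upward once it is first attained.

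The low degrees I would dispatch directly. Degree $0$ is trivial. In degree $1$, every configuration of Theorem~\ref{teorema133} has at most three collinear points, so the nine points are certainly not contained in a single line; hence $I_1=0$ and $\dim(R/I)_1=3$. In degree $2$, B\'ezout's theorem implies that an irreducible cubic meets any conic in at most six points, so no conic contains all nine points, giving $I_2=0$ and $\dim(R/I)_2=6$.

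In degree $3$, the results of~\cite{tg,lu} quoted in the text say that the linear system of plane cubics through the nine points has projective dimension $0$ or $1$, equivalently $\dim I_3\in\{1,2\}$ and $\dim(R/I)_3\in\{9,8\}$; this is exactly the dichotomy appearing in the statement. If $\dim(R/I)_3=9$, Lemma~\ref{lemma51} applied repeatedly yields $\dim(R/I)_d=9$ for every $d\geq 3$, producing the first Hilbert function.

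In the remaining case $\dim(R/I)_3=8$, I would pick a basis $F_1,F_2$ of $I_3$ with $F_1$ the irreducible cubic passing through the points. Since $F_1$ is irreducible and $F_2$ is not proportional to it, $\gcd(F_1,F_2)=1$, so by B\'ezout $V(F_1)\cap V(F_2)$ is a zero-dimensional scheme of length exactly $9$; it contains $\{P_0,\ldots,P_8\}$, hence equals this set. Thus the nine points form a complete intersection of bidegree $(3,3)$ and the Koszul resolution
\[
0\to R(-6)\to R(-3)^{\oplus 2}\to R\to R/I\to 0
\]
gives at once the Hilbert function $1,3,6,8,9,9,9,\ldots$ required. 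The main obstacle is precisely this last step: showing $\dim(R/I)_4=9$ rather than merely $\dim(R/I)_4\leq 9$. A direct count via multiplying $F_1,F_2$ by linear forms only produces $\dim I_4\geq 6$, and Lemma~\ref{lemma51} propagates the value $9$ but cannot establish it; the complete intersection observation above is what pins down the jump from $8$ to $9$ cleanly, after which Lemma~\ref{lemma51} handles all $d\geq 4$.
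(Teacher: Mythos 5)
Your proposal is correct and follows essentially the same route as the paper: the case $\dim(R/I)_3=9$ is handled by iterating Lemma~\ref{lemma51}, and the case $\dim(R/I)_3=8$ by observing that the nine points are a complete intersection of two cubics (your gcd/B\'ezout argument and the Koszul resolution just make explicit what the paper leaves as a one-line remark). The low-degree computations and the degree-$3$ dichotomy are likewise the ones the paper relies on, so there is nothing to correct.
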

\begin{proof}
  If the nine points are contained in only one cubic curve, the result
  immediately follows from lemma~\ref{lemma51}. If we have two irreducible
  cubic curves through the nine points, the result is a consequence
  of the fact that the points are a complete intersection.
\end{proof}

It is possible to verify that, when the level of a configuration
is $5$ or more, only one of the
two Hilbert functions is possible for that configuration,
meanwhile, it can happen that a configuration of level $4$ or less admits
both Hilbert functions:
\begin{exmp} Consider the configuration of level 4: $012, 034, 056, 078$.
  In general, the linear system of $9$ points in this configuration is
  of dimension $0$ but there are suitable positions of the points which
  are contained in two irreducible cubic curves, like the following:
  $P_0 = (0\!: 0\!: 1)$, $P_1 = (1\!: 0\!: 1)$, $P_2 = (2\!: 0\!: 1)$,
  $P_3 = (0\!: 1\!: 1)$,
  $P_4 =  (0\!: 2\!: 1)$, $P_5 = (2\!: -3\!: 1)$,
  $P_6 =  (4\!: -6\!: -5)$,
 $P_7 =  (30\!: -19\!: 9)$, $P_8 =  (60\!: -38\!: 15)$.
\end{exmp}

\bibliographystyle{plain}
\bibliography{paperLP}

\end{document}